\documentclass[11pt]{amsart}
\usepackage{amssymb,amsmath,amsthm,comment}
\usepackage[margin=1.1in]{geometry}
\usepackage[colorlinks,linkcolor=blue,citecolor=blue,urlcolor=blue]{hyperref}
\usepackage{booktabs}
\usepackage{graphicx}
\usepackage{microtype}
\usepackage{pgfplots}
\pgfplotsset{compat=1.18}

\newcommand{\Q}{\mathbb{Q}}
\newcommand{\Z}{\mathbb{Z}}
\newcommand{\R}{\mathbb{R}}
\newcommand{\C}{\mathbb{C}}
\newcommand{\OO}{\mathcal{O}}
\newcommand{\fa}{\mathfrak{a}}
\newcommand{\fb}{\mathfrak{b}}
\newcommand{\fp}{\mathfrak{p}}
\newcommand{\eps}{\varepsilon}
\newcommand{\Norm}{\mathrm{N}}
\newcommand{\Prob}{\mathbf{P}}
\DeclareMathOperator{\Res}{Res}
\DeclareMathOperator{\ord}{ord}
\DeclareMathOperator{\rank}{rank}

\theoremstyle{plain}
\newtheorem{thm}{Theorem}[section]
\newtheorem{prop}[thm]{Proposition}
\newtheorem{lem}[thm]{Lemma}
\newtheorem{cor}[thm]{Corollary}
\theoremstyle{definition}
\newtheorem{rem}[thm]{Remark}

\title{Determining norm-Euclidean complex cubic fields}

\author[G. Bagger]{Gustav Kj\ae rbye Bagger}\thanks{}
\address{School of Science, The University of New South Wales Canberra, Australia}
\email{g.bagger@unsw.edu.au}

\author[A. Booker]{Andrew R. Booker}\thanks{}
\address{School of Mathematics, University of Bristol, England}
\email{andrew.booker@bristol.ac.uk}

\author[T. Trudgian]{Tim Trudgian}\thanks{TT is partially supported by ARC Grant  DP240100186.}
\address{School of Science, The University of New South Wales Canberra, Australia}
\email{timothy.trudgian@unsw.edu.au}

\date{\today}

\begin{document}
\begin{abstract}
We prove that Cavallar and Lemmermeyer's list of $58$ norm-Euclidean complex
cubic fields is complete.  The proof combines a classical discriminant bound
with exact certificates of non-Euclideanity.  The unit action reduces their
verification to finite lattice enumeration with rigorous interval arithmetic.
An independent covering computation confirms norm-Euclideanity for the
$58$ fields.
\end{abstract}
\maketitle

\section{Introduction}
\noindent
Let $K$ be a number field of degree $n$. Denote by $\OO$ its ring of integers and $\Norm$ the usual norm map. For $\xi\in K$ define
\[
   M(K,\xi):=\inf\{|\Norm(\xi-\gamma)| : \gamma\in\OO\},
   \qquad
   M(K):=\sup_{\xi\in K}M(K,\xi).
\]
We say that $K$ is \emph{norm-Euclidean} if $M(K,\xi)<1$ for every $\xi\in K$.  Note the strict inequality: $M(K,\xi)=1$ for a single $\xi$ already disproves norm-Euclideanity.
We are interested in \emph{complex cubic} fields, i.e.\ $n=3$ with signature $(r_1,r_2)=(1,1)$; equivalently, $d_K<0$. For the real case see \cite{Bagger} and \cite[\S 5.2]{Lemmermeyer}.  Fix the real embedding and one of the two complex embeddings.  For $\xi\in K$, we write
\[
   \xi_{\C}\in\C \quad \text{and} \quad \xi_{\R}\in\R
\]
for its embedded values.  Extending $\xi\mapsto(\xi_{\C},\xi_{\R})$ by $\R$-linearity identifies
$K\otimes_{\Q}\R$ with $\C\times\R$.  We write these coordinates as
$(x+iy,z)$, so that
\begin{equation}\label{eq:norm-embeddings}
   \Norm\xi=|\xi_{\C}|^2\xi_{\R}=(x^2+y^2)z.
\end{equation}
We shall choose a fundamental unit $\eps$ and the complex embedding so that   $\eps_{\R}>1$ and $\Im\eps_{\C}>0.$
It follows that $\Norm\eps=1$, $|\eps_{\C}|=\eps_{\R}^{-1/2}$, and the
regulator is $R_K=\log\eps_{\R}$.

Davenport~\cite{Dav} proved there are only finitely many norm-Euclidean complex cubic fields. Moreover, the discriminant of such a field satisfies
\begin{equation}\label{eq:vdl}
   |d_K|\le170520 .
\end{equation}
This is van der Linden's \cite[Thm.~5.19]{vdL} sharpening of
Cassels'~\cite{Cas} result; see Remark~\ref{rem:bound}.  Cavallar and Lemmermeyer \cite{CL1,CL2} determined
the norm-Euclidean fields with $|d_K|\le 1600$ and conjectured that the resulting list of $58$ discriminants is complete. We resolve this conjecture below.
\begin{thm}\label{thm:main}
A complex cubic field is norm-Euclidean if and only if its discriminant is one of the $58$ values listed by Cavallar and Lemmermeyer \cite{CL2}, the largest absolute value being $876$.
\end{thm}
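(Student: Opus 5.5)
The proof has two directions, and I would handle them separately.

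\emph{The $58$ listed fields are norm-Euclidean.} This was established by Cavallar and Lemmermeyer \cite{CL1,CL2}. I will nevertheless reverify it independently. The method is a covering computation. Take a fundamental domain $F$ for $\OO$ acting on $K\otimes_{\Q}\R\cong\C\times\R$. Partition $F$ into boxes. For each box $B$, find a $\gamma\in\OO$ with $\sup_{(x+iy,z)\in B}(x^2+y^2)|z-\gamma_{\R}|\cdots<1$, where the supremum is bounded rigorously with interval arithmetic using \eqref{eq:norm-embeddings}.

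The region where $|z|$ or $|x+iy|$ is large is not compact in any useful sense. I would bring it back into a compact region using multiplication by powers of $\eps$, which preserves $|\Norm|$. Points near $0$ need separate treatment: if $|\xi_{\C}|^2|\xi_{\R}|<1$, take $\gamma=0$. A finite set of exceptional points where the minimum is attained near $1$ may survive the covering. These I would handle exactly, by exhibiting an explicit $\gamma$ with $|\Norm(\xi-\gamma)|<1$ and, if needed, a second-step argument.

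\emph{No other field is norm-Euclidean.} By \eqref{eq:vdl}, it suffices to treat every complex cubic field with $|d_K|\le170520$ whose discriminant is not in the list. These fields can be enumerated from existing tables, obtained via Belabas-type enumeration of binary cubic forms.

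For each such field, I would produce a certificate of non-Euclideanity: a point $\xi\in K$ with $M(K,\xi)\ge1$. A natural candidate is $\xi=\alpha/\beta$ with small denominator $\beta$, for instance a half-integer point or $\xi\in\fp^{-1}$ for a small prime $\fp$. The reason for this choice is that $\xi$ then has finite orbit modulo $\OO$ under multiplication by $\eps$, since $\eps^k\xi\equiv\xi\pmod{\OO}$ for some period $k$.

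To verify $|\Norm(\xi-\gamma)|\ge1$ for all $\gamma\in\OO$, I would first use the unit action to reduce to $\gamma$ in a bounded region. Suppose $|\Norm(\xi-\gamma)|<1$. Then some unit multiple $\eps^j(\xi-\gamma)$ has its embedding in a fundamental box of the form $\eps_{\R}^{-1}<|z|\le 1$, say (after scaling), with $|x+iy|$ bounded. Because $\eps^j\xi\equiv\xi'$ for $\xi'$ in the finite orbit, the problem reduces to showing that, for each orbit element $\xi'$, no lattice point $\gamma$ lies in an explicit bounded region around $\xi'$. That is a finite lattice enumeration (e.g.\ Fincke–Pohst), and I would certify it with interval arithmetic.

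\emph{Main obstacle.} The hardest step is finding a valid $\xi$ for every one of the tens of thousands of fields, especially those with large regulator. For those, the orbit is long and the enumeration region is large, with size roughly proportional to $e^{R_K}$.

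I would first try random small-denominator points ranked by a heuristic estimate of $M(K,\xi)$, for instance from a floating-point search. If that fails, I would fall back to the classical approach of choosing $\xi$ with denominator a prime of small norm, so that $\Norm(\beta)$ forces $|\Norm(\alpha-\gamma\beta)|\ge1$ whenever this quantity is nonzero. In that setting, showing $|\Norm(\xi-\gamma)|\ge1$ reduces to congruence conditions: $\alpha-\gamma\beta$ must avoid elements of norm $\pm1,\dots,\pm(\Norm\beta-1)$ in the relevant residue class.
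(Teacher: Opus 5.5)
Your overall structure is sound. The Euclidean direction may simply be cited from \cite{CL1,CL2}. Your covering sketch is essentially Section~\ref{sec:converse}, though the fundamental domain modulo $\OO$ is already compact, so the units serve there to eliminate cubes whose $\eps$-image misses every uncovered cube, not to compactify. Your reduction of one certificate to a finite enumeration via the finite $\eps$-orbit of $\alpha/\beta$ is correct and amounts to Lemma~\ref{lem:inv}.

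The gap is in the step you flag as the main obstacle: nothing in your plan can be expected to produce a certifiable witness for the fields with large $\rho_K$, which include those with the largest regulators. You estimate the region to be enumerated as having size $\propto e^{R_K}$. That is the size of its bounding box, or of any single ellipsoid containing it, as Fincke--Pohst would require. The region itself has volume $2\pi R_K$ and contains on average only about $2\rho_K$ points of each relevant translate of $\OO$. Believing the former, you retreat to small, even prime, denominators. But for a modulus $\beta$ whose unit image has size $u$, a witness class heuristically needs $|\Norm\beta|\gtrsim u\,e^{\rho_K u}$, and $u\ge2$ unless $(\beta)\mid 2$. For $d_K=-152135$, with $\rho_K\approx8.2$ and $R_K\approx508.8$, that means a divisor of $\eps-1$ or $\eps+1$ of norm roughly $2.6\times10^7$ or more. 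Nothing guarantees one among the accessible factors of a $221$-digit integer. A prime of small norm has essentially no chance, whatever the order of $\eps$ modulo it, and random small-denominator points face exactly the same count.

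The missing idea is that certifying one $\eps^n$-stable coset costs work linear in $nR_K$, not exponential. Cut the band $1\le|z|\le|\eps_{\R}|^n$ into $\lceil nR_K/\log2\rceil$ dyadic slices $|z|\in[2^i,2^{i+1})$. Rescale each slice so that its norm-bounded part has bounded shape, LLL-reduce the rescaled lattice $\beta\OO$, and bound the enumeration box rigorously by Lemma~\ref{lem:box}. The boxes are then small in practice, at a working precision of $O(nR_K)$ bits.

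This is what makes the huge modulus $\beta=\eps^n-1$ itself usable (Method~II, Section~\ref{sec:MethodII}). Its unit image has size at most $2n$ without any factorisation, and the heuristic success rate $e^{-2n\rho_K}$ for a random $\alpha$ is independent of $|\Norm\beta|$. The paper settles its $400$ hardest fields this way, including $d_K=-152135$ with a $221$-digit $|\Norm\beta|$.

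Your congruence fallback, once extended from primes to composite moduli built from prime-power factors of $\eps^n-1$ and to zero-divisor classes, is essentially Method~I, which settles the other $16825$ fields. Incidentally, you could discard every field with $h_K>1$ at once, since a norm-Euclidean ring is a principal ideal domain; this cuts the list to $17283$ fields.
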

To prove Theorem~\ref{thm:main}, we construct an exact, independently checkable certificate that $K$ is not norm-Euclidean for every $|d_K|\le170520$ outside the $58$ cases given in~\cite{CL2}. Since a norm-Euclidean ring is Euclidean and hence a principal ideal domain, only fields with class number $h_K=1$ need
be considered; there are $17283$ such fields with $|d_K|\le170520$. This list is provided by the LMFDB \cite{LMFDB}, whose enumeration of the cubic fields of bounded discriminant and of their class groups is complete and unconditional in this range. In fact, already in~\cite{Bel}, Belabas enumerated cubic fields with $|d_K|\leq 10^{11}$.

We briefly outline how the bound~\eqref{eq:vdl} arises, since future researchers may wish to generalize Cassels' and van der Linden's approach to number fields of higher degree.
\begin{rem} \label{rem:bound}
Cassels \cite[Thm.~5]{Cas} proves that any three linear forms $L_1,L_2,L_3$ of
determinant $\Delta$ with $L_1$ real and $L_3=\overline{L_2}$ admit a point
$(t_1^0,t_2^0,t_3^0)$. This is a rational point in the case of conjugate forms of a cubic field. If so, then  $|L_1L_2L_3|\ge|\Delta|/420$ on the whole class of $(t_1^0,t_2^0,t_3^0)$ modulo
$1$.  Taking the $L_i$ to be the embeddings of $t_1\omega_1+t_2\omega_2+t_3\omega_3$
for an integral basis gives $|\Delta|=\sqrt{|d_K|}$ and $L_1L_2L_3=\Norm$, so
$M(K)\ge\sqrt{|d_K|}/420$ and a norm-Euclidean field has $|d_K|<420^2=176400$.

The constant $420$ can be improved.  Both Cassels and van der
Linden \cite[(5.19)]{vdL} arrive at
\[
   |d_K| \le \frac{1024}{3}
   \frac{(k^{1/2}+1)^{2}(k^{3/2}-1)^{6}}{k^{2}(k-2)^{6}}
   \qquad(k>2),
\]
whose right-hand side is minimised at
\[
   k = \bigl(1+\sqrt3\bigr)^{2} = 7.46410\ldots.
\]
Van der Linden computed the minimum numerically.
Its exact value gives
\[
   |d_K| \le 3^8\left(\frac{5+3\sqrt{3}}{2}\right)^2= 85293+\frac{98415\sqrt3}{2} = 170522.89\ldots.
\]
Since $d_K\equiv0,1\pmod4$ and $d_K<0$, the only admissible value in
$[170520,170522.89\ldots]$ is
$170520$ itself.  Hence \eqref{eq:vdl} with equality allowed.
Cavallar and Lemmermeyer \cite{CL2} quote the bound as $|d_K|<170520$, but the distinction matters only in principle; the difference is the single value $d_K=-170520$, and there is
no cubic field of that discriminant.
\end{rem}

Throughout this paper we shall make use of the following criterion.

\begin{prop}\label{prop:crit}
Let $\alpha,\beta\in\OO$, with $\beta\neq0$. Define
$C=\alpha+\beta\OO$ and
$m(C)=\min\{|\Norm\gamma| : \gamma\in C\}$.
Then
\[
   M\!\left(K,\tfrac{\alpha}{\beta}\right)=\frac{m(C)}{|\Norm\beta|}.
\]
In particular, if $m(C)\ge|\Norm\beta|$ then $M(K)\ge 1$ and $K$ is not
norm-Euclidean.
\end{prop}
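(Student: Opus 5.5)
The identity should follow directly from the multiplicativity of the norm together with a reparametrisation of the coset $C$. Write $\xi=\alpha/\beta$. For any $\gamma\in\OO$ we have
\[
   \xi-\gamma=\frac{\alpha-\beta\gamma}{\beta},
   \qquad
   |\Norm(\xi-\gamma)|=\frac{|\Norm(\alpha-\beta\gamma)|}{|\Norm\beta|}.
\]
The map $\gamma\mapsto\alpha-\beta\gamma$ is a bijection from $\OO$ onto $\alpha+\beta\OO=C$, since $-\gamma$ ranges over $\OO$ as $\gamma$ does. Taking the infimum over $\gamma$ therefore gives
\[
   M(K,\xi)=\frac{\inf\{|\Norm c| : c\in C\}}{|\Norm\beta|}.
\]

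The one point needing care is that this infimum is attained, so that it equals the minimum $m(C)$. Every $c\in C\subset\OO$ has $\Norm c\in\Z$, so $|\Norm c|$ takes values in the nonnegative integers. Any nonempty subset of $\Z_{\ge0}$ has a least element, hence the infimum is a minimum. This also covers the case $\alpha\in\beta\OO$: then $0\in C$ and $m(C)=0$, consistent with $M(K,\xi)=0$ for $\xi\in\OO$.

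For the final assertion, suppose $m(C)\ge|\Norm\beta|$. The identity gives $M(K,\alpha/\beta)\ge1$, so $M(K)\ge M(K,\alpha/\beta)\ge1$. The point $\xi=\alpha/\beta$ is an element of $K$ with $M(K,\xi)\ge1$. This violates the strict inequality $M(K,\xi)<1$ in the definition of norm-Euclidean, so $K$ is not norm-Euclidean.

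There is no real obstacle in this argument. The only step beyond routine manipulation is the integrality argument showing that the infimum defining $M(K,\xi)$ is attained on the coset $C$.
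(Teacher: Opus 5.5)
Your proposal is correct and follows essentially the same route as the paper's proof: multiplicativity of the norm, the identification $\{\alpha-\beta\gamma:\gamma\in\OO\}=C$, attainment of the infimum because the absolute norms lie in $\Z_{\ge0}$, and then $M(K)\ge M(K,\alpha/\beta)\ge1$. Your extra remark on the case $\alpha\in\beta\OO$ is a harmless addition consistent with the paper's Remark on that condition.
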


\begin{proof}
For $\gamma \in \OO$, we have $\Norm(\alpha/\beta-\gamma)=\Norm(\alpha-\beta\gamma)/\Norm(\beta)$ and $\{\alpha-\beta\gamma : \gamma \in \OO\} = C$. Thus
\begin{align*}
M(K,\alpha/\beta) = & \ \inf\{\vert \Norm(\alpha/\beta - \gamma)\vert : \gamma \in \OO\} \\
= & \ \inf\left\{\left\vert \frac{\Norm(\alpha -\beta\gamma)}{\Norm\beta}\right\vert : \gamma \in \OO\right\} \\
= & \ \frac{1}{\vert \Norm\beta\vert}\inf\{\vert \Norm(\gamma)\vert : \gamma \in C\}.
\end{align*}
Since $\{|\Norm\gamma|:\gamma\in C\} \subseteq \Z_{\geq 0}$, the
infimum is attained and equal to  $m(C)$, as required. The last statement follows directly from
$M(K)\ge M(K,\alpha/\beta)\ge 1$.
\end{proof}

The rest of the paper is concerned with finding a \emph{witness pair}
$(\alpha,\beta)$ satisfying the inequality $m(\alpha+\beta\OO)\ge|\Norm\beta|$.
Such a pair proves that $K$ is not norm-Euclidean and gives the quantitative
bound $M(K)\ge m(\alpha+\beta\OO)/|\Norm\beta|$.  Failure to find a pair has
no implication: norm-Euclideanity requires a separate covering argument.  We
carry out such an argument for the $58$ remaining fields in
Section~\ref{sec:converse} using the methods of Cerri and
Lezowski~\cite{Cer,Lez}.

\begin{rem}
\label{rem:nonzero}
The condition $\alpha\notin\beta\OO$ is essential.  If
$\alpha\in\beta\OO$, then $0\in C$ and $m(C)=0$.  An enumeration that omits
zero would instead return the least non-zero norm in $\beta\OO$, namely
$|\Norm\beta|$, and thereby produce a spurious ratio of $1$.  We therefore
test this condition explicitly; see Section~\ref{sec:verify}.  In particular,
if $\beta$ is a unit then $\beta\OO=\OO$, so every $\alpha\in\OO$ is excluded
by this condition.
\end{rem}

We present two methods for finding witness pairs.  Method~I, developed in
Section~\ref{sec:MethodI}, enumerates the residue classes represented by
elements of small norm and searches for an unrepresented class.  Method~II,
developed in Section~\ref{sec:MethodII}, samples residue classes modulo
$\eps^n-1$.  The candidates produced by both methods are passed to the common
certifier of Section~\ref{sec:verify}, which uses the unit action to restrict
an infinite lattice coset to a compact region and then enumerates its lattice
points by slicing and lattice reduction.  In particular, the verification of
a Method~I certificate does not rely on the ideal factorisation and
principality computations used to find it.  Section~\ref{sec:cost} describes
the cost and division of labour between the two methods.  In
Section~\ref{sec:converse} we
independently verify that the $58$ fields of \cite{CL2} are norm-Euclidean,
and Section~\ref{sec:results} combines the two computations to prove
Theorem~\ref{thm:main}.  Section~\ref{sec:files} describes the supplementary
material.

\subsection*{AI usage}
We used Claude Opus~5 to solve this problem through a series of prompts over two days in late July 2026. We subsequently audited the code and wrote the paper with the assistance of GPT~6 Astra. We take full responsibility for verifying all mathematical statements and computations in this paper. 

\section{Method I: residue marking}\label{sec:MethodI}\label{sec:finite}
Method~I rests on an exact enumeration for a fixed modulus.  Fix a non-zero
non-unit $\beta\in\OO$.  We show how to decide whether it admits a witness
class by a finite computation.
Since $K$ has signature $(1,1)$, Dirichlet's unit theorem immediately forces $\rank(\OO^\times)=1$. Moreover, the only roots of unity in $\OO$ are $\pm 1$, so
\begin{equation}\label{eq:units}
   \OO^{\times}=\{\pm\eps^{j}: j\in\Z\}
\end{equation}
for a fundamental unit $\eps$. Fix some $\beta \in \OO$, setting $\fb=(\beta)$ and $R=\OO/\fb$. Denote by $[\gamma]_\fb$ the equivalence class of $\gamma\in \OO$ in $R$. Define
$$
   U=\text{image of }\OO^{\times}\text{ in }R
   =\{[\pm\eps^{j}]_\fb\},
$$
and let $u=|U|$. Clearly $u$ is finite by virtue of $|R|=|\Norm\beta|=: Q$. If
$t=\ord_{\fb}(\eps)$, then $u\in\{t,2t\}$.

\begin{prop}\label{prop:finite}
Assume $h_K=1$, and for each non-zero ideal $\fa\subseteq\OO$ fix a generator
$\gamma_{\fa}$.  Then
\begin{equation} \label{eq: finite classes}
   S:=\{[\gamma]_\fb\;:\;\gamma\in\OO,\ 0<|\Norm\gamma|<Q\}
   =\bigcup_{0<\Norm\fa<Q} U\cdot[\gamma_{\fa}]_\fb,
\end{equation}
which is a union of at most $u\cdot\#\{\fa:\Norm\fa<Q\}$ classes.  Consequently, for
$\alpha\notin\fb$,
\begin{equation} \label{eq: S membership criterion}
   m(\alpha+\beta\OO)\ge Q
   \Longleftrightarrow
   [\alpha]_\fb\notin S .
\end{equation}
\end{prop}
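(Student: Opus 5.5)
We prove the set equality \eqref{eq: finite classes} by double inclusion, then read off the counting bound and the criterion \eqref{eq: S membership criterion} from Proposition~\ref{prop:crit}.

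\textbf{The inclusion $\subseteq$.} Take $\gamma\in\OO$ with $0<|\Norm\gamma|<Q$, and set $\fa=(\gamma)$. Then $\Norm\fa=|\Norm\gamma|$ lies in $(0,Q)$. Since $h_K=1$, $\fa$ is principal with the fixed generator $\gamma_\fa$. Two generators of the same principal ideal differ by a unit, so $\gamma=\eta\gamma_\fa$ for some $\eta\in\OO^\times$. By \eqref{eq:units}, $\eta=\pm\eps^j$. Reducing modulo $\fb$ gives $[\gamma]_\fb=[\eta]_\fb[\gamma_\fa]_\fb\in U\cdot[\gamma_\fa]_\fb$.

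\textbf{The inclusion $\supseteq$.} Given $\fa$ with $0<\Norm\fa<Q$ and $\eta\in\OO^\times$, the element $\gamma=\eta\gamma_\fa$ satisfies $|\Norm\gamma|=|\Norm\eta|\,\Norm\fa=\Norm\fa$. So $\gamma$ is admissible in the definition of $S$, and its class is $[\eta]_\fb[\gamma_\fa]_\fb$. As $\eta$ ranges over $\OO^\times$, $[\eta]_\fb$ ranges over all of $U$, so every element of $U\cdot[\gamma_\fa]_\fb$ lies in $S$.

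\textbf{The counting bound.} Each set $U\cdot[\gamma_\fa]_\fb$ is the image of $U$ under multiplication by $[\gamma_\fa]_\fb$, so it has at most $u$ elements (possibly fewer if $\gamma_\fa$ is a zero divisor mod $\fb$). There are finitely many ideals of norm less than $Q$, so the union has at most $u\cdot\#\{\fa:\Norm\fa<Q\}$ classes.

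\textbf{The criterion.} The coset $C=\alpha+\beta\OO$ is exactly the set of $\gamma\in\OO$ with $[\gamma]_\fb=[\alpha]_\fb$. Since $\alpha\notin\fb$, we have $0\notin C$, so every element of $C$ has nonzero norm. Hence $m(C)<Q$ if and only if some $\gamma\in C$ has $0<|\Norm\gamma|<Q$, which holds if and only if $[\alpha]_\fb\in S$. Taking contrapositives gives \eqref{eq: S membership criterion}.

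Nothing here is a serious obstacle. The only point needing care is the role of $\alpha\notin\fb$: without it, $0\in C$ and the equivalence fails (cf.\ Remark~\ref{rem:nonzero}). Likewise, the definition of $S$ requires $|\Norm\gamma|>0$, which excludes $\gamma=0$, the only element of norm zero.
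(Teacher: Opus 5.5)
Your proposal is correct and follows the paper's proof essentially step for step. Both prove the set equality by double inclusion, using that two generators of $\fa$ differ by a unit $\pm\eps^j$, and both derive the criterion from $0\notin\alpha+\beta\OO$ together with the fact that this coset is exactly the residue class of $\alpha$. You additionally spell out the counting bound, namely that each orbit $U\cdot[\gamma_\fa]_\fb$ has at most $u$ elements, which the paper leaves implicit.
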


\begin{proof}
If $0<|\Norm\gamma|<Q$ then $\fa=(\gamma)$ is a non-zero ideal of norm
$|\Norm\gamma|<Q$. By \eqref{eq:units}, we have $\gamma = w\gamma_{\fa}$ for some $w\in\OO^{\times}$, so $[\gamma]_\fb\in U\cdot[\gamma_{\fa}]_\fb$.
Conversely, if $\Norm\fa<Q$, every generator $w\gamma_{\fa}$ of $\fa$ satisfies
$|\Norm(w\gamma_{\fa})|=\Norm\fa$. Thus $[w\gamma_{\fa}]_\fb \in S$ and every ideal is
principal, so no ideal is missed, proving~\eqref{eq: finite classes}. If $\alpha\notin\fb$, then $0 \notin \alpha + \beta \OO$. Thus
\begin{align*}
m(\alpha+\beta\OO)<Q \iff & \exists~\gamma \in \OO \text{ st } \vert\Norm(\alpha + \beta \gamma)\vert < Q \\
\iff & \exists~\gamma \in \OO \text{ st } [\alpha + \beta\gamma]_\fb \in S \\
\iff & [\alpha]_\fb \in S,
\end{align*}
with Equivalence~\eqref{eq: S membership criterion} following from the contrapositive statement.
\end{proof}

Everything on the right-hand side of~\eqref{eq: finite classes} is computable exactly:

\begin{itemize}
\item Decompose each rational prime $p<Q$ in $\OO$ and retain the prime
      ideals of norm $<Q$.  Every ideal of norm $<Q$ is a product of these.
      Factoring the defining polynomial modulo $p$ suffices when $p$ does
      not divide the index of the defining order; at index primes one uses
      prime ideal decomposition in the full ring of integers.
\item Generators are multiplicative, $\gamma_{\fa}=\prod_i\gamma_{\fp_i}^{e_i}$
      up to a unit, so only generators of \emph{prime} ideals are needed, and
      only their residues modulo $\fb$.
\item The ring $R$ is represented by the Hermite normal form of $\fb$ in the integral
      basis of $\OO$, so a residue is a triple of integers. Ring
      multiplication uses the (integral) multiplication table of $\OO$.
\end{itemize}
The same enumeration can be continued beyond $Q$.  If all ideals of norm
below $B=\lambda Q$, with $\lambda\ge1$, are enumerated, then each residue
class either acquires a least represented norm below $B$ or is left unmarked.
Any non-zero class whose least represented norm is at least $Q$ is a witness.
Taking $\lambda>1$ costs a factor of roughly $\lambda$, but may sharpen the
resulting lower bound for $M(K)$.

\begin{rem}
We do \emph{not} require $\alpha$ to be invertible in $R$. If $\alpha$ is a
zero divisor, the class $C$ is still a legitimate witness; one only has to enumerate the ideals $\fa$ of norm $<Q$ that are \emph{not} coprime to $\fb$ as well, which our implementation does. Only the class of $0$ has to be excluded,
and no $\gamma$ with $|\Norm\gamma|<Q$ lies in it. Admitting these classes roughly doubles the supply of witnesses for moduli built from small primes and is what makes the small primes above $2$ and $3$ useful.
\end{rem}

\subsection{Counting unmarked classes}
For a fixed modulus, Proposition~\ref{prop:finite} gives a deterministic
algorithm: mark every residue class represented by an element of norm less
than $Q=|\Norm\beta|$, and look for a non-zero unmarked class.  The choice of
$\beta$ is a compromise.  A larger residue ring offers more possible witness
classes, but it also makes the marking step more computationally expensive. By the analytic class number formula for the Dedekind zeta-function of $K$, we have 
\[
   \rho_K=\Res_{s=1}\zeta_K(s)=\frac{2^{r_1}(2\pi)^{r_2}h_KR_K}{w_K\sqrt{|d_K|}}
         =\frac{2\pi R_K}{\sqrt{|d_K|}}\qquad(h_K=1,\ w_K=2),
\]
so that $\#\{\fa:\Norm\fa<X\}\sim\rho_KX$, by the standard ideal-counting theorem.
Proposition~\ref{prop:finite} leads to a deterministic sufficient condition.

\begin{cor}\label{cor:count}
If $1+u\cdot\#\{\fa: 0<\Norm\fa<Q\}<Q$ then a witness class exists for
$\beta$, and $K$ is not norm-Euclidean.
\end{cor}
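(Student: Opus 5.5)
This is a pigeonhole argument built on Proposition~\ref{prop:finite}. By~\eqref{eq: finite classes}, the set $S$ of residue classes modulo $\fb$ represented by elements $\gamma$ with $0<|\Norm\gamma|<Q$ is a union of the orbits $U\cdot[\gamma_\fa]_\fb$ over the ideals $\fa$ with $0<\Norm\fa<Q$. Each orbit has at most $u=|U|$ elements, so
\[
   |S|\le u\cdot\#\{\fa:0<\Norm\fa<Q\}.
\]
We also account for the zero class $[0]_\fb$, which is excluded from consideration as a witness by Remark~\ref{rem:nonzero}. The set $S\cup\{[0]_\fb\}$ therefore has at most $1+u\cdot\#\{\fa:0<\Norm\fa<Q\}$ elements. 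The hypothesis says this is strictly less than $Q=|R|$.

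Since $|R|=|\OO/\fb|=|\Norm\beta|=Q$, some class $[\alpha]_\fb\in R$ lies outside $S\cup\{[0]_\fb\}$. Pick any representative $\alpha\in\OO$ of it. Then $\alpha\notin\fb=\beta\OO$ because the class is non-zero, and $[\alpha]_\fb\notin S$. The equivalence~\eqref{eq: S membership criterion} of Proposition~\ref{prop:finite}, which requires exactly $\alpha\notin\fb$, gives $m(\alpha+\beta\OO)\ge Q=|\Norm\beta|$. Thus $(\alpha,\beta)$ is a witness pair, and Proposition~\ref{prop:crit} yields $M(K)\ge M(K,\alpha/\beta)\ge1$, so $K$ is not norm-Euclidean.

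There is no genuine obstacle. The only points to check are the standing assumptions under which Proposition~\ref{prop:finite} applies: $h_K=1$, which is in force throughout Method~I, and $\beta$ a non-zero non-unit, which is implicit since $Q>1$ follows from the hypothesis. We also need $|R|=Q$ and the orbit-size bound $|U\cdot[\gamma]_\fb|\le u$. The remaining care is in not forgetting the zero class, which is why the ``$1+$'' appears in the hypothesis.
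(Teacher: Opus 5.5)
Your proposal is correct and is essentially the paper's argument, which the paper compresses to one line: count the at most $u\cdot\#\{\fa:0<\Norm\fa<Q\}$ classes in $S$, exclude $[0]_\fb$, and conclude via \eqref{eq: S membership criterion} and Proposition~\ref{prop:crit}. You have simply written out the pigeonhole step and the standing hypotheses in full.
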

This follows by counting the classes in $S$ and excluding $[0]_\fb$. Since $\#\{\fa:\Norm\fa<Q\}\approx\rho_KQ$, Corollary~\ref{cor:count} only helps
whenever $u\rho_K<1$, which is rare. The search for a witness class is therefore driven by
the following heuristic.  Approximate the orbits of $U$ acting on $R$ by
$Q/u$ orbits of size $u$, and suppose the $\approx\rho_KQ$ ideals of norm
$<Q$ are distributed uniformly among them.  This ignores smaller orbits
arising from stabilisers, notably on non-invertible classes.  The predicted
number of unmarked orbits is then $Qu^{-1}e^{-\rho_K u}.$
A natural scale for the modulus, with a safety factor $s$, is therefore $|\Norm\beta| = Q \gtrsim sue^{\rho_K u}$.
Two consequences guide the search.  First, the required modulus grows exponentially with $u$: the
modulus must be chosen so that $\eps$ has \emph{small order} modulo $\fb$.
Second, $\rho_K$ is a useful predictor of the difficulty of a
field; see Section~\ref{sec:cost}.

\subsection{Choosing the modulus}
Put $\beta_n=\eps^n-1$ and $\fb_n=(\beta_n)$.  By construction,
$\ord_{\fb_n}(\eps)\mid n$, and $\fp^k\mid\fb_n$ if and only if
$\ord_{\fp^k}(\eps)\mid n$.  We therefore look for products of small prime
power factors of the ideals $\fb_n$.

The points $\xi=\alpha/(\eps-1)$ are fixed by multiplication by $\eps$ on
$K/\OO$, and are natural candidates for large values of $M(K,\xi)$;
see \cite[\S{}Heuristics]{CL1}.  More generally,
$\xi=\alpha/(\eps^n-1)$ is fixed by $\eps^n$.  Allowing $n>1$ is important:
the examples in Section~\ref{sec:results} include maximisers of period two
and three.

For a field with large regulator, the integer
$|\Norm(\eps^n-1)|\asymp e^{nR_K}$ is far too large to factor completely.
However, complete factorisation is unnecessary: only its small rational prime divisors
are needed. Trial division up to a bound $P$, for
$n\in\{1,2,3,4,6,8,12\}$, finds these in a single pass.  We then factor the
corresponding rational primes in $K$ and determine
$\ord_{\fp^k}(\eps)$ directly in $\OO/\fp^k$.

\subsection{Assembling the factors}
The preceding step produces triples
\[
   (\fp^k,\Norm\fp^k,\ord_{\fp^k}(\eps)),
   \qquad \fp^k\mid\fb_n\ \text{for some }n.
\]
We form $\fb=\prod_i\fp_i^{k_i}$ and choose a generator $\beta$ of $\fb$.
Then $\ord_{\fb}(\eps)=\mathrm{lcm}_i\ord_{\fp_i^{k_i}}(\eps)$ and
$|\Norm\beta|=\prod_i\Norm\fp_i^{k_i}$.  For each target $T$ for the lcm, each of the two possible values $u\in\{T,2T\}$, and each of a geometric ladder of safety factors $s$, we solve by branch and bound the small knapsack
\[
   \text{minimise }\textstyle\prod_i\Norm\fp_i^{k_i}
   \quad\text{subject to}\quad
   \textstyle\prod_i\Norm\fp_i^{k_i}\ge sue^{\rho_Ku},
   \qquad \ord_{\fp_i^{k_i}}(\eps)\mid T .
\]
The resulting moduli are deduplicated, ordered by norm, and tried from
smallest to largest.  When several prime ideals of the same norm are
available, we vary the choice: equal norms need not give the same residue-ring
structure.  We also record $n=\ord_{\fb}(\eps)$, so that
$\beta\mid\eps^n-1$; this is the datum needed by the common verification
procedure of Section~\ref{sec:verify}.

\section{Method II: sampling modulo
\texorpdfstring{$\eps^{n}-1$}{epsilon to the n minus 1}}\label{sec:MethodII}

Method~I relies on finding a moderate-size divisor of $\eps^n-1$.  When the
regulator is large, the small prime factors of $|\Norm(\eps^n-1)|$ may simply
be too sparse to provide one.  Method~II removes this obstruction by taking
the whole element  $\beta=\eps^n-1$.
Then $\eps^n\equiv1\pmod\beta$, and the image of the unit group modulo
$\beta$ has size at most $2n$.  This gives a small unit image without any
factorisation; the order of $\eps$ may be a proper divisor of $n$, and
$-1$ may already belong to the subgroup it generates.
The residue ring, however, has size about $e^{nR_K}$ and cannot be
enumerated.  We therefore sample classes $\alpha\pmod\beta$ and apply the
verification of Section~\ref{sec:verify} only to the candidates that survive.

\subsection{The geometry of the unit action}

Write $\xi=(x+iy,z)\in\C\times\R$.
By \eqref{eq:norm-embeddings}, the level set $|\Norm\xi|=1$ is $(x^2+y^2)|z|=1.$
Its two components are surfaces of revolution; the positive component is the
funnel shown in Figure~\ref{fig:norm-one}.  Multiplication by $\eps$ expands
the vertical coordinate by $\eps_{\R}$, contracts horizontal distances by
$\eps_{\R}^{-1/2}$, and rotates through $\arg\eps_{\C}$.  It therefore moves
points along a fixed norm surface.  A fundamental band for multiplication by
$\eps^n$ is
\[
   1\le |z|<\eps_{\R}^n.
\]
This simple picture is the reason that an apparently unbounded minimisation
can be made finite: after restricting to one band, imposing
$|\Norm\xi|\le B$ also bounds the horizontal radius $\sqrt{x^2+y^2}$.

\begin{figure}[t]
\centering
\begin{tikzpicture}
\begin{axis}[
   width=.72\textwidth,
   height=.56\textwidth,
   view={55}{27},
   axis lines=center,
   xlabel={$x$},
   ylabel={$y$},
   zlabel={$z$},
   xmin=-2.2,xmax=2.2,
   ymin=-2.2,ymax=2.2,
   zmin=0,zmax=5.2,
   ticks=none,
   clip=false,
   colormap={normsurface}{color(0cm)=(blue!18); color(1cm)=(blue!65)}]
\addplot3[
   surf,
   shader=interp,
   opacity=.72,
   draw=blue!40,
   domain=0:360,
   y domain=.44:2.05,
   samples=45,
   samples y=24]
   ({y*cos(x)},{y*sin(x)},{1/(y*y)});
\addplot3[
   only marks,
   orange!85!black,
   mark=*,
   mark options={fill=orange!85!black}]
   coordinates {
      (-.3473,-1.9696,.25)
      (.9090,-1.0834,.5)
      (1,0,1)
      (.4545,.5417,2)
      (-.0868,.4924,4)
   };
\end{axis}
\end{tikzpicture}
\caption{The positive component $(x^2+y^2)z=1$ of the absolute norm-one surface,
with the real embedding on the vertical axis.  Multiplication by the
fundamental unit moves a point upward, inward, and around the axis.  The five
marked points schematically represent the consecutive powers
$\eps^k$, $-2\le k\le2$, of a model unit.  The negative component is the
reflection of this surface in the horizontal plane.}
\label{fig:norm-one}
\end{figure}

\subsection{The expected sampling rate}

Sampling $\alpha$ modulo $\beta=\eps^n-1$ is equivalent to sampling the
finite set
\[
   \beta^{-1}\OO/\OO
   =\{\xi\in K/\OO:\eps^n\xi=\xi\}.
\]
These are periodic points of the unit action, not uniformly distributed
points of the ambient torus $(K\otimes_{\Q}\R)/\OO$.  The distinction matters:
the exceptional set in that torus has Haar measure zero.  Indeed, the unit
action is a hyperbolic toral automorphism and hence ergodic.  Extending
$M(K,\xi)$ to $K\otimes_{\Q}\R$ by the same formula, for each $c>0$ the set
$\{\xi:M(K,\xi)<c\}$ is invariant and open and contains a neighbourhood of
zero.  It therefore has full measure, so $M(K,\xi)=0$ almost everywhere.
The proportion of exceptional points in the finite set above can nevertheless
be positive.  We estimate that proportion heuristically as follows.

For $\xi=(x+iy,z)$, let $s=\log|\Norm\xi|$, $t=\log|z|$, and
$\theta=\arg(x+iy)$.  In these coordinates the Euclidean volume element on
$\C\times\R$ is $\tfrac12e^s\,ds\,dt\,d\theta$ on each choice of sign of
$z$.  Hence the part of $\{|\Norm\xi|<1\}$ in one fundamental band for
the $\eps$-action has volume
\[
   2\cdot\frac12
   \int_{-\infty}^0e^s\,ds
   \int_0^{R_K}dt
   \int_0^{2\pi}d\theta
   =2\pi R_K.
\]
For a uniformly translated lattice $\xi-\OO$, the mean number of points
in this bounded band is its volume divided by the covolume
$\sqrt{|d_K|}/2$.  Thus, when $h_K=1$, this continuous average is
\begin{equation}\label{eq:lambda}
   \lambda=\frac{2\pi R_K}{\sqrt{|d_K|}/2}
          =\frac{4\pi R_K}{\sqrt{|d_K|}}
          =2\rho_K.
\end{equation}
For a band of length $nR_K$ the continuous average is $2n\rho_K$.
Using this as an approximation for the finite sampling set, and modelling
the number of encounters in the band by a Poisson random variable, suggests
\begin{equation*}
   \Prob\bigl(m(\alpha+\beta\OO)\ge|\Norm\beta|\bigr)
   \approx e^{-2n\rho_K}.
\end{equation*}
Neither the passage from continuous to discrete sampling nor the Poisson
assumption is exact.  The estimate predicts a preference for $n=1$, with
each increment of $n$ multiplying the work by about $e^{2\rho_K}$, but a
useful witness may first occur at a larger period.  Trials are allocated without relying on a
predicted success probability, in round-robin batches among the unsettled
fields.

\subsection{Cost of a trial}

A failed trial is stopped as soon as one element of
$\alpha+\beta\OO$ with norm below $|\Norm\beta|$ is found.  The volume
calculation above is uniform in $t=\log|z|$.  In the same Poisson model,
when $2n\rho_K$ is large, a rejection examines approximately
\[
   \frac{R_K/\log2}{2\rho_K}
   =\frac{\sqrt{|d_K|}}{4\pi\log2}
\]
of the binary slices introduced in Section~\ref{sec:verify}; this is about
$45$ at the upper end of our discriminant range and is independent of the
regulator.

Three implementation choices keep the work per slice small.  The precision is
proportional to $2nR_K/\log2$ bits.  The large cancellation needed to locate
the coset is performed once per slice; nearby lattice points are then tested
in ordinary floating-point arithmetic.  Finally, trials are grouped into
progressions $\alpha_t=\alpha_0+t\delta$.  The coordinate centre of the coset
in each reduced lattice basis is affine in $t$ and is needed only modulo
$\Z^3$, so the multiprecision calculation is shared by the whole block.  The
step $\delta$ is chosen on the scale of $\beta$: taking $\delta=1$ would move
$\alpha/\beta$ only by $O(e^{-nR_K})$ in the real embedding and would produce
strongly correlated trials.

\section{A common verification procedure}\label{sec:verify}

A certificate from either method records exact defining data for $K$, an
integral basis of $\OO$, a unit $\eps$ with $|\eps_{\R}|>1$, an integer $n\ge1$, and elements
$\alpha,\beta\in\OO$ such that $\beta\mid\eps^n-1$.  Its assertion is
\begin{equation}\label{eq:claim}
   m(\alpha+\beta\OO)\ge |\Norm\beta|.
\end{equation}
For Method~II we have $\beta=\eps^n-1$; for Method~I, $\beta$ is a divisor of
such an element.  One could check a Method~I certificate by running
Proposition~\ref{prop:finite} in reverse: enumerate the ideals of norm below
$|\Norm\beta|$, take their generators, and confirm that none is congruent to
$\alpha$.  We instead use the same verification for both methods.  This
keeps the ideal factorisation, principality and generator computations of
Method~I outside the proof.  It is also essential for Method~II, where
$|\Norm\beta|\asymp e^{nR_K}$ and has $221$ decimal digits in the largest
case, so enumerating all smaller ideals is impossible.  The common verifier
must therefore handle the infinite lattice coset $\alpha+\beta\OO$, of
covolume $|\Norm\beta|\sqrt{|d_K|}/2$, directly.

The following Lemma shows the importance of the unit action to our method.

\begin{lem}\label{lem:inv}
Let $\beta\mid\eps^{n}-1$ and $C=\alpha+\beta\OO$.  Then $\eps^{n}C=C$, and
$|\Norm|$ is constant on the orbits of $\gamma\mapsto\eps^{n}\gamma$ in $C$.
\end{lem}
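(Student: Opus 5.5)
The plan is to verify the two assertions directly from the definitions, using only that $\beta\mid\eps^n-1$ and that $\eps$ is a unit of norm $1$.

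\emph{Invariance.} Write $\eps^n-1=\beta\delta$ with $\delta\in\OO$. For $\gamma=\alpha+\beta\mu\in C$, where $\mu\in\OO$, expand
\[
   \eps^n\gamma=\alpha+(\eps^n-1)\alpha+\beta\eps^n\mu=\alpha+\beta(\delta\alpha+\eps^n\mu).
\]
Since $\delta\alpha+\eps^n\mu\in\OO$, this gives $\eps^nC\subseteq C$. The same computation with $\eps^{-n}$ in place of $\eps^n$ gives the reverse inclusion. This works because $\eps^{-n}-1=-\eps^{-n}(\eps^n-1)$ is also divisible by $\beta$, as $\eps^{-n}\in\OO$. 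Applying $\eps^n$ to $\eps^{-n}C\subseteq C$ then yields $C\subseteq\eps^nC$, so $\eps^nC=C$. Equivalently, multiplication by the unit $\eps^n$ is a bijection of $\OO$ that preserves $\beta\OO$ and fixes the class of $\alpha$ modulo $\beta$.

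\emph{Norm constancy.} By multiplicativity of the norm, $|\Norm(\eps^n\gamma)|=|\Norm\eps|^n\,|\Norm\gamma|=|\Norm\gamma|$. This uses $\Norm\eps=1$, as noted in the introduction; in any case $\Norm\eps=\pm1$ for any unit. Hence $|\Norm|$ is constant along each orbit $\{\eps^{nk}\gamma:k\in\Z\}$, and these orbits lie in $C$ by the first part.

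I do not expect a real obstacle here. The only point that needs care is the reverse inclusion $C\subseteq\eps^nC$, which requires the negative power $\eps^{-n}$ to lie in $\OO$. That holds because $\eps$ is a unit.
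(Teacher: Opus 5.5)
Your proof is correct and follows essentially the same route as the paper. The paper observes that $\eps^{n}\alpha-\alpha=\alpha(\eps^{n}-1)\in\beta\OO$ and that $\eps^{n}\beta\OO=\beta\OO$ because $\eps^{n}$ is a unit (which packages your reverse inclusion via $\eps^{-n}$), and then obtains norm constancy from multiplicativity of the norm with $|\Norm\eps|=1$.
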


\begin{proof}
We have
$\eps^{n}\alpha-\alpha=\alpha(\eps^{n}-1)\in\beta\OO$, and
$\eps^{n}\beta\OO=\beta\OO$ because $\eps^{n}$ is a unit; so
$\eps^{n}C=C$.  For the second statement,
$|\Norm(\eps^{n}\gamma)|=|\Norm\eps|^{n}|\Norm\gamma|=|\Norm\gamma|$.
\end{proof}

Neither fundamentality nor the sign convention for $\eps$ is needed here.  Put $\ell=\log|\eps_{\R}|>0$ and
$E=|\eps_{\R}|=e^{\ell}$; thus $\ell=R_K$
when the supplied unit is fundamental.  For $B>0$ define
\begin{equation}\label{eq:compact-band}
   \mathcal F_n(B)
   =\bigl\{(x+iy,z)\in\C\times\R:
       1\le |z|\le E^n,\ (x^2+y^2)|z|\le B\bigr\}.
\end{equation}
This set is compact.  Indeed, the real coordinate is bounded above and away
from zero, while \eqref{eq:norm-embeddings} gives
$x^2+y^2\le B/|z|\le B$.  Every $\xi\in K^\times$ has a translate
$\eps^{kn}\xi$ with $1\le|(\eps^{kn}\xi)_{\R}|<E^n$.  Thus, if
$\alpha\notin\beta\OO$, Lemma~\ref{lem:inv} gives
\begin{equation*}\label{eq:finite}
   m(\alpha+\beta\OO)
   =\min_{\gamma\in(\alpha+\beta\OO)\cap
                   \mathcal F_n(|\Norm\alpha|)}|\Norm\gamma|.
\end{equation*}
The intersection on the right is finite, because it is the intersection of a
compact set with a lattice coset.  This is the finiteness statement behind
the verification.

The volume explains why this reduction is useful.  Repeating the calculation of
\eqref{eq:lambda}, now with $s\le\log B$ and a $t$-interval of length
$n\ell$, gives
$\operatorname{vol}\mathcal F_n(B)=2\pi n\ell B$.  The corresponding
volume-to-covolume estimate is
\[
   \#\{\gamma\in C\cap\mathcal F_n(B)\}\approx
   \frac{2\pi n\ell B}{|\Norm\beta|\sqrt{|d_K|}/2}.
\]
For a fundamental unit, $h_K=1$, and $B=|\Norm\beta|$, this is
$2n\rho_K$, independently of the size of $|\Norm\beta|$.  This is a
heuristic count, not an upper bound for an individual coset; the exhaustive
enumeration below does not rely on it.

\subsection{Slicing}

Although \eqref{eq:compact-band} is compact, its real coordinate ranges over
a factor $E^n=e^{n\ell}$, as large as $e^{509}$ in our computation.  We divide
it into $\lceil n\ell/\log2\rceil$ binary slices
$|z|\in[2^i,2^{i+1})$.  Write
\[
   \iota(\xi)=\begin{pmatrix}\Re\xi_{\C}&\Im\xi_{\C}&\xi_{\R}\end{pmatrix}^{\top}
                =\begin{pmatrix}x&y&z\end{pmatrix}^{\top},\qquad
   a_i=2^{i+1},\qquad b_i=\sqrt{B/2^i},
\]
and set $D_i=\operatorname{diag}(b_i^{-1},b_i^{-1},a_i^{-1})$ and
$\mathbf v=\begin{pmatrix}v_1&v_2&v_3\end{pmatrix}^{\top}=D_i\iota(\xi)$.  Then
\begin{equation}\label{eq:scaled}
   |\Norm\xi| = 2B(v_1^{2}+v_2^{2})|v_3|,
\end{equation}
and a point of slice $i$ has $|v_3|\in[\tfrac12,1)$.  The lattice $\beta\OO$
becomes $D_i\iota(\beta\OO)$, of covolume
$|\Norm\beta|\sqrt{|d_K|}/(4B)$ independently of $i$.  In particular, for
$B=|\Norm\beta|$ the covolume is $\sqrt{|d_K|}/4$.
Let $A_i$ be a basis matrix for this scaled lattice obtained by applying
LLL to an approximation of the initial basis, as described below, and
put $ c_i=A_i^{-1}D_i\iota(\alpha).$
The points of the scaled coset are $A_i(c_i-u)$ with $u\in\Z^3$.

The following Lemma makes the enumeration exhaustive.

\begin{lem}\label{lem:box}
Let $\gamma$ be a point of the coset $\alpha+\beta\OO$ in slice $i$ with
$|\Norm\gamma|<B$, let $\mathbf v=D_i\iota(\gamma)$, and let
$u\in\Z^{3}$ satisfy $\mathbf v=A_i(c_i-u)$.  Then
$\|\mathbf v\|<\sqrt2$ and
\[
   \bigl|u_j-\lfloor (c_i)_j+\tfrac12\rfloor\bigr|
   \le \sqrt{2}\bigl\|(A_i^{-1})_j\bigr\| + \tfrac12
   \qquad (j=1,2,3),
\]
where $(A_i^{-1})_j$ is the $j$-th row of $A_i^{-1}$.
\end{lem}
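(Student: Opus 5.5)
The bound on $\|\mathbf v\|$ comes first, from the scaled norm formula, and the bound on $u$ then follows by inverting the basis and rounding.

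\textbf{Step 1: bound $\|\mathbf v\|$.} Since $\gamma$ lies in slice $i$, we have $|z|\in[2^i,2^{i+1})$, so $|v_3|=|z|/a_i\in[\tfrac12,1)$. By \eqref{eq:scaled},
\[
|\Norm\gamma|=2B(v_1^2+v_2^2)|v_3|.
\]
The hypothesis $|\Norm\gamma|<B$ then gives $v_1^2+v_2^2<1/(2|v_3|)\le 1$. Combined with $v_3^2<1$, this yields $\|\mathbf v\|^2<2$.

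\textbf{Step 2: express $u$ through $\mathbf v$.} From $\mathbf v=A_i(c_i-u)$ we get $c_i-u=A_i^{-1}\mathbf v$. Taking the $j$-th coordinate,
\[
|(c_i)_j-u_j|=|(A_i^{-1})_j\cdot\mathbf v|\le\|(A_i^{-1})_j\|\,\|\mathbf v\|<\sqrt2\,\|(A_i^{-1})_j\|
\]
by Cauchy--Schwarz.

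\textbf{Step 3: pass to the rounded centre.} Put $r_j=\lfloor (c_i)_j+\tfrac12\rfloor$, so that $|(c_i)_j-r_j|\le\tfrac12$. The triangle inequality gives
\[
|u_j-r_j|\le|u_j-(c_i)_j|+|(c_i)_j-r_j|\le\sqrt2\,\|(A_i^{-1})_j\|+\tfrac12,
\]
which is the claimed bound.

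The only point needing care is that the lemma is stated for the exact matrices $A_i$ and $c_i$. The LLL reduction was performed on an approximation of the initial basis, but the transformation it produces is unimodular, so $A_i$ is a genuine basis of the exact scaled lattice $D_i\iota(\beta\OO)$. Consequently, every coset point $\mathbf v$ of the form $A_i(c_i-u)$ has $u\in\Z^3$, as the statement assumes. Errors from interval arithmetic affect only how the bound is evaluated later in the computation, not the bound itself.
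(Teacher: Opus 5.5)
Your proof is correct and follows the paper's argument essentially line for line: the slice condition $|v_3|\in[\tfrac12,1)$ together with the scaled norm formula gives $\|\mathbf v\|^2<2$, and then $u=c_i-A_i^{-1}\mathbf v$, Cauchy--Schwarz, and the rounding error of at most $\tfrac12$ give the box bound. Your closing remark on the unimodularity of the LLL transformation is not needed for the lemma as stated, since the lemma takes $u\in\Z^3$ as given, but it correctly explains why every coset point admits such a $u$.
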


\begin{proof}
Being in slice $i$ means $|v_3|\in[\tfrac12,1]$, so \eqref{eq:scaled} and
$|\Norm\gamma|<B$ give $v_1^{2}+v_2^{2}<1/(2|v_3|)\le1$, while $v_3^{2}\le1$;
hence $\|\mathbf v\|^{2}<2$.  From $u=c_i-A_i^{-1}\mathbf v$ we get
$|u_j-(c_i)_j|\le\|(A_i^{-1})_j\|\|\mathbf v\|$, and rounding $(c_i)_j$ to the
nearest integer costs a further $\tfrac12$.
\end{proof}

It is therefore enough to enumerate
\[
   |u_j-\lfloor(c_i)_j+\tfrac12\rfloor|\le h_{ij},\qquad
   h_{ij}=\left\lfloor
       \sqrt2\,\|(A_i^{-1})_j\|+\tfrac12
   \right\rfloor,
\]
using a certified upper bound for each row norm.  Taking
$B=|\Norm\beta|$ proves \eqref{eq:claim} if no point in these boxes has
absolute norm below $B$.  The size of the box is part of the proof: omitting a lattice
point can only increase the apparent minimum and could therefore create a
false certificate.

To determine $m(C)$ exactly, rather than merely prove $m(C)\ge|\Norm\beta|$,
the verifier also needs a point attaining the claimed minimum.  If the first
sweep finds a least norm $v\le B$, exhaustiveness below $B$ proves
$m(C)=v$.  Otherwise it repeats the enumeration with
\[
   B'=\min\{v+1,|\Norm\alpha|+1\},
\]
or with $B'=|\Norm\alpha|+1$ if the first sweep found no point.  A known
coset point then has norm strictly below $B'$, so its unit translate lies in
the region covered by the second sweep.  The least norm found in that sweep
is therefore the exact minimum.  We use this procedure for the quantitative
results in Section~\ref{sec:results}.

\subsection{Search and certification}
The search and the proof have independent implementations.  Method~I uses
ideal factorisation and generators to find candidates, whereas Method~II
screens candidates using approximate arithmetic.  Neither computation is
reused in certification.

The common verifier is implemented in C++, using FLINT~\cite{FLINT} for exact
integer and rational arithmetic, Arb~\cite{Arb} for certified root isolation
and real and complex interval arithmetic, and fplll~\cite{fplll} for lattice
reduction.  The underlying multiprecision arithmetic uses GMP~\cite{GMP}
and MPFR~\cite{MPFR}.  Its geometric computations are enclosed as follows:
\begin{enumerate}
\item certified root enclosures determine the embeddings;
\item an enclosure of $A_i^{-1}$ gives upper bounds for its row norms and
      hence for $h_{ij}$ in Lemma~\ref{lem:box};
\item if an enclosure of $(c_i)_j$ allows rounded centres from $a_j$ through
      $b_j$, the enumeration covers the whole interval
      $a_j-h_{ij}\le u_j\le b_j+h_{ij}$;
\item every point in the enumeration box is tested by an exact integer norm.
\end{enumerate}
If the inverse cannot be enclosed or the resulting box is too large,
verification is refused.  Insufficient precision can therefore prevent
certification, but cannot produce a false certificate.  No slice was refused
in the computation of Section~\ref{sec:results}.

LLL is used only heuristically in certification: no guarantee of reducedness
is required from its implementation.  If $V_i$ is the initial basis matrix
of the scaled lattice, we apply LLL to a scaled and rounded approximation
of $V_i$ to obtain an integer matrix $T$.  We check $|\det T|=1$ exactly,
then form $A_i=V_iT$ and enclose its inverse in interval arithmetic.
The enumeration bounds are derived directly from this basis by
Lemma~\ref{lem:box}, independently of how $T$ was obtained.  A poor choice
of $T$ can make enumeration expensive or cause a refusal, but cannot
invalidate a certificate.

Lattice reduction is nevertheless crucial to efficiency.  Even after the
slice scaling, the basis inherited from $\beta\OO$ can be highly skewed,
giving large row norms of its inverse and hence a prohibitively large
enumeration box.  A good change of basis makes these bounds small enough
for exhaustive enumeration to be practical; it is this actual quality of
the basis, rather than any certified LLL property, that matters.

As noted in Remark~\ref{rem:nonzero}, the verifier must also check
$\alpha\notin\beta\OO$.  It does so exactly, by dividing in $K$ and testing
whether the quotient belongs to $\OO$.

\subsection{Inputs and exact checks}

The verifier takes the defining polynomial, a basis, a unit, and the witness
as exact algebraic data.  It verifies that the basis contains $1$ and is closed
under multiplication, but it cannot verify that the resulting order is the
full ring of integers.  A correct basis of $\OO_K$ is therefore the only
trusted external input.  The supplied unit need not be fundamental.

We compute the basis independently in PARI/GP~\cite{PARI} and
Magma~\cite{Magma}.  A basis of $\OO_K$
is not unique, so the two need not be identical.  We check that they span
the same lattice, i.e.\ that the change of basis $T$ with $B=TA$ is integral
with $|\det T|=1$.  Over the $17283$ fields the two spans agree in every case.

The certifier makes the following exact checks.

\begin{enumerate}
\item \textbf{The basis spans an order.}  Its multiplication table is
      constructed from the basis by exact rational arithmetic and verified to be
      integral, and $\omega_1=1$.
\item  \textbf{That $\eps$ is a unit}: $|\Norm\eps|=1$, an exact integer determinant.
\item \textbf{The coset is $\eps^{n}$-stable}: $\beta\mid\eps^{n}-1$, an
      exact rational solve.  This is the hypothesis of Lemma~\ref{lem:inv}, on
      which the reduction to a bounded region rests.
\item \textbf{That $\alpha\notin\beta\OO$}, the hypothesis of
      Proposition~\ref{prop:crit} (Remark~\ref{rem:nonzero}).
\item \textbf{The reduction is a change of basis}: $|\det T|=1$, exactly.
\item \textbf{Every norm entering a decision} is an exact integer.
\end{enumerate}

It also verifies $|\eps_{\R}|>1$ by a certified enclosure, replacing the
supplied unit by its inverse if necessary before forming the modulus or
checking the divisibility condition.  A torsion unit, or an enclosure that
does not establish expansion, is refused.  Neither GRH nor
\texttt{bnfcertify} enters this verification; neither the class number nor
any ideal factorisation enters the certificate.

\begin{rem}\label{rem:hdir}
The classification uses the complete enumeration of complex cubic fields in
the discriminant range and the unconditional class-number computations
described in the introduction.  These establish the completeness of the
list of fields with $h_K=1$ that we examine.

Verification of an individual certificate requires less: given a correct
basis of $\OO_K$, it uses neither the class number nor the regulator, and
checks directly that the supplied $\eps$ is a unit without requiring it to
be fundamental.  This separation keeps the verifier's external inputs to a
minimum; it does not reflect any uncertainty about the class numbers or
regulators, which can be computed rigorously and readily at these small
discriminants.
\end{rem}

\section{Cost and shape of the computation}\label{sec:cost}

For Method~I the heuristic modulus scale grows as $ue^{\rho_Ku}$, where
$u$ is the size of the unit image.  A small image, often of size two, is
therefore desirable.  For Method~II, the model with $n=1$ predicts
$e^{2\rho_K}$ trials, with a typical rejection examining
$O(\sqrt{|d_K|})$ slices.  Both estimates depend exponentially on
\[
   \rho_K=\Res_{s=1}\zeta_K(s)=\frac{2\pi R_K}{\sqrt{|d_K|}}\qquad(h_K=1),
\]
which measures the density of ideals in the ideal-counting asymptotic.
It is a useful predictor of search cost, but does not determine the
Euclidean minimum or the success probability for an individual field.
Its distribution over the $17283$ fields is given below.

\begin{table}[!h]
\caption{Distribution of $\rho_{K}$}
\begin{center}
\begin{tabular}{lrrrrrrrrr}
\toprule
$\rho_K$ & $<1$ & $1$--$2$ & $2$--$3$ & $3$--$4$ & $4$--$5$ & $5$--$6$ & $6$--$7$ & $7$--$8$ & $>8$\\
\midrule
fields & 4153 & 6618 & 3971 & 1594 & 629 & 217 & 76 & 24 & 1\\
$\%$ above & 76.0 & 37.7 & 14.7 & 5.5 & 1.8 & 0.58 & 0.14 & 0.01 & ---\\
\bottomrule
\end{tabular}
\end{center}
\end{table}

The methods are complementary.  Method~I is effective when a small modulus
with a small unit image is available.  Method~II avoids finding such a
modulus, but its success depends on the exceptional points among those fixed
by $\eps^n$; the smallest period need not contain a witness.  Our search
therefore applies Method~I with increasing bounds and work budgets, followed
by sampling for the unresolved fields.  The final certificate set described
in Section~\ref{sec:results} contains $16825$ certificates from Method~I and
$400$ from Method~II.  These are the certificates retained for the proof,
not counts of all successful searches.

\subsection{The extreme case:
\texorpdfstring{$d_K=-152135$}{discriminant -152135}}\label{sec:extreme}

The field of discriminant $-152135$ provides the sharpest test of the cost
model.

When $d_K=-152135$ we have $R_K\approx508.8$ and $\rho_K\approx8.20$, the largest value of $\rho_K$
among all $17283$ fields, the next being $\approx7.82$.
The median over all $17283$ fields is $\approx1.64$.
Section~\ref{sec:MethodII} predicts a success rate $e^{-2\rho_K}\approx e^{-16.4}$, so
about $1.3\times10^{7}$ trials, and Section~\ref{sec:verify} predicts
$\lceil R_K/\log2\rceil=735$ slices of which a rejection
inspects about $\sqrt{|d_K|}/(4\pi\log2)\approx45$ under the heuristic
model.  In contrast, the Method~I modulus scale
$2e^{2\rho_K}\approx2.6\times10^7$ is much larger than the moduli retained
in our Method~I certificates.

The final certificate uses $n=1$, with $|\Norm\beta|$ a $221$-digit integer,
and proves
\[
   M(-152135)\ge1.038709.
\]
The displayed bound is rounded down from the exact rational value.  The
stored coordinates of $\alpha$ have at most $74$ digits: choosing a small
representative of the coset reduces the certificate's size without changing
the bound.  This remains only a lower bound for $M(K)$, not a determination
of the field's Euclidean minimum.

\section{Independent verification for the norm-Euclidean fields}\label{sec:converse}

Methods~I and~II cannot prove norm-Euclideanity.  We verify this direction
by an independent implementation of the covering method of 
\cite{CL1,CL2}, using rigorous interval bounds throughout.

The object is a fundamental domain for $K\otimes_{\Q}\R$ modulo $\OO_K$, taken as
the unit cube in the coordinates of an integral basis.  A cube $T$ is
\emph{covered} when a single $\gamma\in\OO_K$ satisfies
$\sup_{\xi\in T}|\Norm(\xi-\gamma)|<1$; uncovered cubes are subdivided into
eight and retried.  If every point of the fundamental domain has a strict
norm $<1$ witness, the resulting open cover has a finite subcover, so
sufficiently fine subdivision can in principle cover the whole domain.
To reduce the subdivision required, we interleave covering with elimination
under the unit action, as in \cite{CL2}.  Since $M(K,\xi)$ is invariant
under multiplication by $\eps$, a cube whose image $\eps T$, reduced modulo
$\OO$, meets no remaining  and cannot contain an exceptional point.
Covering and elimination are repeated to a fixed point at each depth.

Everything in the proof is exact or enclosed.  The defining polynomial,
integral basis, and unit are given by exact integer and rational data.
Certified root enclosures determine the embeddings, the cube coordinates
are exact dyadics, and interval arithmetic bounds the supremum on each cube.
The elimination step uses integer arithmetic.

The covering prover uses FLINT~\cite{FLINT} for exact arithmetic and
Arb~\cite{Arb} for the root enclosures.  We carry out the covering computation
with three implementations of the interval arithmetic: Arb balls, MPFI
endpoint intervals~\cite{MPFI}, and Platt's \texttt{int\_double}
library~\cite{Platt} using double-precision endpoints with directed rounding.
All three prove norm-Euclideanity for all $58$ fields.  They share
the exact algebra and unit-elimination code, so their agreement provides an
independent check of the interval arithmetic rather than of the entire
implementation.

\subsection{A mean-value bound}

Write $\xi-\gamma=(x+iy,z)$, so that
$f(\xi)=|\Norm(\xi-\gamma)|=(x^2+y^2)|z|$.  Here $x,y,z$ are affine
functions of the integral-basis coordinates $t_1,t_2,t_3$ of $\xi$.
Interval evaluation encloses their ranges on a cube, including
the uncertainty in the embedding coefficients.  Forming the product
$(x^2+y^2)|z|$ from these separate enclosures loses the dependence between
the three functions and can overestimate the supremum by a term of first
order in the cube width.

When $z$ has constant nonzero sign on the cube $T$, we also use the mean-value
bound
\[
  \sup_{T}f \le f(m)+r\sum_{j}\sup_{T}\Bigl|\frac{\partial f}{\partial t_j}\Bigr|,
\]
with $m$ the (exactly dyadic) centre, $r$ the half-width and every supremum
again taken as an enclosure.  The derivative is evaluated as a signed sum
before taking its absolute value, so cancellation between the variation of
$x^2+y^2$ and that of $|z|$ is retained.  For a fixed smooth branch, this
bound differs from the true supremum by $O(r^2)$ in exact arithmetic; the
variation of $f$ itself is generally still first order.  A maximiser of the
lower envelope $M(K,\cdot)$ need not be a stationary point of any individual
translated norm function.  If the enclosure of $z$ contains zero, we use the
product bound instead; otherwise we take the smaller of the two bounds.

The sharper enclosure also makes it practical to test witnesses $\gamma$
far from the cube in integral-basis coordinates.  Such elements can still
have small norm difference because one embedding is small, despite the
others being large.

\subsection{Enumerating the unit image of a cube}

The elimination step asks which cubes the image $\eps T$ meets.  Taking the
axis-aligned bounding box of that image can be expensive: $|\Norm\eps|=1$, so
the image parallelepiped has the volume of a single cell, but the expanding
embedding shears it into a long, thin sliver.  It meets a number of cells of the
order of the sum of its extents while its bounding box holds their
product: for $d_K=-744$, about $10^{4}$ against $5\times10^{5}$.

In grid coordinates, the image of the cell $c+[0,1]^{3}$ is
$M_{\eps}c+M_{\eps}[0,1]^{3}$ with $M_{\eps}c$ an integer vector, so every
cube's image is a translate, by a lattice vector, of one fixed parallelepiped:
the set of cells met is the same for every cube.  It is computed once, by the
separating-axis theorem.  For two parallelepipeds, it is enough to test the
face normals of each, together with the cross products of one edge direction
from each.  Here the cell has normals and edges $e_j$, and the parallelepiped
has edges the columns of $M_{\eps}$ and, since $|\det M_{\eps}|=1$ makes the
inverse integral, normals the rows of $M_{\eps}^{-1}$; every axis and every
projection is therefore an integer, and the test is exact.  Touching counts as
meeting, so the set is a superset of the cells really met: an extra cell can
only leave a cube in play, which is the safe direction, whereas a missing cell
would permit a wrong elimination.

We find this set without enumerating the bounding box, by flood filling from the
cell holding the centre of the parallelepiped.  We regard two cells as
neighbours when they share a face, edge, or vertex (the $26$ neighbours of a
cell in a three-dimensional grid).  The cells meeting a convex body are
connected in this graph: a line segment between points in two such cells stays
inside the body and passes only through cells that meet it.  The enumeration is
therefore proportional to the answer
rather than to the box; for $d_K=-744$ the set has $10616$ cells.

All $58$ fields are proved norm-Euclidean, with maximum subdivision depth
$12$.

\section{Computational results}\label{sec:results}

We use one final set of $17225$ certificates, supplied with the supplementary
material~\cite{Code}.  Every certificate has been checked by the common verifier of
Section~\ref{sec:verify}, including an exact determination of its coset minimum.  All certificate counts,
bounds, and sizes reported here refer to this set.  The percentages below
are relative to the $17283$ input fields.

On a laptop with an Intel Core i7-1265U processor, using twelve parallel
workers, verification of all certificates, including their exact coset
minima, took about $4.5$ minutes of wall time, while the Platt implementation
proved norm-Euclideanity for all $58$ fields in about $7$ seconds.

\begin{table}[!h]
\caption{Certificate results}
\begin{center}
\begin{tabular}{lrr}
\toprule
certificates from Method~I (residue marking) & $16825$ & $97.35\%$\\
certificates from Method~II (sampling)       & $400$   & $2.31\%$\\
\midrule
total settled                                       & $17225$ & $99.66\%$\\
remaining                                           & $58$    & $0.34\%$\\
\bottomrule
\end{tabular}
\end{center}
\end{table}

\noindent The $58$ cases that remain are exactly the discriminants conjectured in \cite{CL2} to be norm-Euclidean, the
largest being $|d_K|=876$.  Every complex cubic field with $h_K=1$ and
$|d_K|\le170520$ outside that list is proved not norm-Euclidean, with
an exact certificate for each.

\begin{proof}[Proof of Theorem~\ref{thm:main}]
If $K$ is norm-Euclidean then $\OO_K$ is a principal ideal domain, so $h_K=1$,
and $|d_K|\le170520$ by \eqref{eq:vdl}.  The complex cubic fields with
$h_K=1$ in that range are exactly the $17283$ of the LMFDB list~\cite{LMFDB},
whose completeness there is unconditional.  Each of the $17225$ of them outside
the list of \cite{CL2} carries a certificate $(\alpha,\beta)$ with
$m(\alpha+\beta\OO_K)\ge|\Norm\beta|$, so $M(K)\ge1$ and $K$ is not
norm-Euclidean by Proposition~\ref{prop:crit}.  Conversely each of the $58$ is
norm-Euclidean by the covering computation of Section~\ref{sec:converse},
which reproves \cite{CL1,CL2} independently.
\end{proof}

Among the Method~I certificates, $1499$ have
$m(\alpha+\beta\OO)/|\Norm\beta|=1$.  This is an exact value for the
chosen coset, not a claim that $M(K)=1$. The remaining $15726$ certificates
give bounds strictly greater than one. The $400$ Method~II fields have $\rho_K$ from $0.76$ to $8.20$ (median $4.11$) and $R_K$ from $2.2$ to $508.8$ (median $197.1$).  Of these, $398$ use $n=1$, one uses $n=2$ ($d_K=-327$), and one uses $n=3$ ($d_K=-680$). Their certified ratios range from approximately $1.00000859$ to $2.0845$, with median $1.0921$.

The representatives $\alpha$ are size-reduced within their cosets, rather
than chosen to attain the minimum.  Their coordinates have at most four
decimal digits for Method~I and $74$ for Method~II, excluding signs.
This choice affects the storage size, not the value of the certificate.

\subsection{Checks against known minima}
Several fields provide useful comparisons with earlier computations.

\begin{itemize}
\item For $d_K=-327$, the Method~II certificate has ratio $101/99$, using
      $n=2$, matching the known value of $M(K)$ in \cite{CL2}.
      The maximiser is fixed by $\eps^2$ and is not visible for odd $n$.
\item For $d_K=-680$, the Method~II certificate, with $n=3$, has ratio
      $81956632/81182612$, matching the known minimum
      in \cite{CL1}.
\item For $d_K=-1055$, the Method~I certificate reproduces the bound
      $M(K)\ge1483/1370$ \cite{CL1}.
\end{itemize}
In the first two examples, our certificates supply the lower bounds;
equality with $M(K)$ also uses the upper bounds established in the cited work.

\section{Supplementary material} \label{sec:files}

The  repository~\cite{Code} contains the field list, one certificate for
each of the $17225$ non-norm-Euclidean fields, and the covering results for
the remaining $58$.  Each certificate includes the exact algebraic data
needed for verification by the method of Section~\ref{sec:verify}.
Source code, data formats, and instructions for reproducing the computations
are documented in the repository's README.

\section*{Acknowledgments}
The authors thank Bryce Kerr and Valeriia Starichkova for fruitful discussions on the topic. They would also like to express their gratitude to Kevin McGown for carefully reading an earlier version of the manuscript and for his helpful suggestions.

\end{document}